\title{Sufficient and insufficient conditions for the stochastic convergence of Ces\`{a}ro means}
\author{Aurélien F. Bibaut, Alex Luedtke and Mark J. van der Laan}
\date{\today}
\newtheorem{theorem*}{Theorem}
\newtheorem{corollary}{Corollary}
\newtheorem{remark}{Remark}
\newtheorem{question}{Question}
\newtheorem{condition}{Condition}
\newtheorem{example}{Example}
\newtheorem{lemma}{Lemma}
\newtheorem{proposition}{Proposition}
\newcommand{\Ind}{\bm{1}}
\newcommand{\sign}{\mathrm{sign}}
\newcommand{\pr}{\mathrm{pr}}
\newcommand{\argmin}{\mathop{\arg\min}}
\begin{document}

\maketitle

\begin{abstract}
We study the stochastic convergence of the Ces\`{a}ro mean of a sequence of random variables. These arise naturally in statistical problems that have a sequential component, where the sequence of random variables is typically derived from a sequence of estimators computed on data. We show that establishing a rate of convergence in probability for a sequence is not sufficient in general to establish a rate in probability for its Ces\`{a}ro mean. We also present several sets of conditions on the sequence of random variables that are sufficient to guarantee a rate of convergence for its Ces\`{a}ro mean. We identify common settings in which these sets of conditions hold.
\end{abstract}


\section{Introduction}


The following fact is well known \citep{cauchy1821, cesaro1888} for deterministic real-valued sequences $(x_n)_{n \geq 1}$:
\begin{align}
n^\beta x_n \rightarrow 0 \textnormal{ for some $\beta\ge 0$}\ \ \implies\ \ n^\beta\bar{x}_n:= n^\beta \frac{1}{n} \sum_{i=1}^n x_n \rightarrow 0. \label{eq:deterministic}
\end{align}
In this note, we investigate the extent to which this kind of result carries over to a sequence $(X_n)_{n\geq 1}$ of random variables defined on a complete probability space $(\mathcal{X}, \mathcal{A}, P)$.  Specifically, we aim to answer the following questions:

\begin{question}
\label{question:insufficient} Is $n^\beta X_n\overset{p}{\rightarrow} 0$ sufficient to ensure that $n^\beta \bar{X}_n:= n^\beta \frac{1}{n}\sum_{i=1}^n X_i\overset{p}{\rightarrow} 0$?
\end{question}

\begin{question}
\label{question:sufficient} Do reasonable conditions on $(X_n)_{n \geq 1}$ imply the convergence of $n^\beta \bar{X}_n$ in probability? almost surely? in mean?
\end{question}

\begin{question}\label{question:exponential} Does knowing that $n^\beta X_n$ satisfies an exponential tail bound imply a similar bound for $n^\beta \bar{X}_n$?
\end{question}

Generalizing the deterministic result \eqref{eq:deterministic} to the stochastic case is important in many statistical problems that have an online or sequential component \citep[e.g.,][]{luedtke2016}. In these settings, $X_n$ is often a function of an estimator computed on data available at time $n$. For example, $X_n$ may be equal to $\widehat{\theta}_n - \theta_0$, where $\theta_0$ is a scalar statistical parameter and $\widehat{\theta}_n$ is an estimator of $\theta_0$ based on the first $n$ observations. Alternatively, $X_n$ may be an excess risk $R(\widehat{\theta}_n) - \inf_{\theta\in \Theta} R(\theta)$, where $R$ a risk function and $\Theta$ is an indexing set. 

Guarantees for estimators are generally stated in terms of some form of stochastic convergence, where the type of convergence established varies depending on the setting. For example, results for empirical risk minimizers (also called minimum contrast estimators or M-estimators) have been given in terms of rates in probability \citep[e.g.,][]{vdV-Wellner-1996} and exponential tail bounds on excess risks \citep[e.g.,][]{bartlett2005,bartlett-jordan-mcauliffe2006}. Convergence rates for kernel density and kernel regression estimators are often given in probability \citep[see e.g.][]{hansen2008}, in mean squared error \citep[see e.g.][]{tsybakov2008}, or almost surely \citep[see e.g.][]{hansen2008}. 

Section~\ref{section:counterexample} answers \ref{question:insufficient} in the negative via a counterexample. Section~\ref{section:l1_conv} answers \ref{question:sufficient} in the affirmative for convergence in mean, and Section~\ref{section:as_conv} similarly answers this question for almost sure convergence. Since convergence in mean or convergence almost surely imply convergence in probability, these sections also yield reasonable conditions for the convergence in probability of $n^\beta \bar{X}_n$. Section~\ref{section:exponential_deviation_bounds} answers \ref{question:exponential} in the affirmative, and also evaluates the implications of this finding for empirical risk minimizers.


Whenever we do not make it explicit in the notation, we use the convention that probabilistic notions are with respect to the measure $P$. This convention is applied to expectations $E$, almost sure convergence, convergence in mean, and $L^r(P)$ norms $\|\cdot\|_r$. Here we recall that $\|f\|_r:=\{\int |f(\omega)|^r dP(\omega)\}^{1/r}$ when $r\in(1,\infty)$ and that $\|f\|_\infty$ denotes the $P$-essential supremum. We call the sequence $(X_n)$ uniformly bounded if $(\|X_n\|_\infty)_{n\ge 1}$ is a bounded sequence.

\section{Motivating examples}

\subsection{Online estimator of the Bayes risk in binary classification.} Suppose $(X_1,Y_1),\ldots,(X_n,Y_n)$ are $n$ i.i.d. copies of a couple of random variables $(X,Y)$, with $X$ a vector of predictors and $Y \in \{-1,1\}$ a binary label. 
Denote $\eta(x):= \pr (Y=1 \mid X=x)$, and let $f_\eta(x) := \sign\{2 \eta(x) - 1\}$ be the Bayes classifier. For any classifier $f$, consider $\ell(f)(x,y) := \Ind[y \neq \sign\{f(x)\}]$ the 0-1 classification loss of $f$, and let $R(f) := E\{\ell(f)(X,Y)\}$, the corresponding classification risk. Say we want to estimate the Bayes risk $R^*:=R(f_\eta)$. Suppose that $(\widehat{f}_i)_{i \geq 1}$ is an $(\mathcal{H}_i)_{i \geq 1}$-adapted sequence of estimators of the Bayes classifiers $f_\eta$, where $\mathcal{H}_i := \sigma\{(X_1,Y_1),\ldots,(X_i,Y_i)\}$ is the filtration induced by the first $i$ observations. Consider the online estimator $\widehat{R}_n := n^{-1} \sum_{i=1}^n \ell(\widehat{f}_{i-1})(X_i,Y_i)$ of $R^*$. It can be checked that the following decomposition holds:
\begin{align}
\widehat{R}_n - R^* = \frac{1}{n}\sum_{i=1}^n \ell(f_{i-1})(X_i,Y_i) - E\left\lbrace \ell(f_{i-1})(X_i,Y_i) \mid \mathcal{H}_{i-1} \right\rbrace + \frac{1}{n} \sum_{i=1}^n \{ R(\widehat{f}_{i-1}) - R^* \}.
\end{align}
The first average can be easily checked to be $O(n^{-1/2})$ with high probability via Azuma-Hoeffding. We would then like to show that the second average is $o(n^{-1/2})$ in some stochastic sense. It is known that, under some well-studied assumptions, the individual terms $R(\widehat{f}_{i-1}) - R^*$ can be shown to converge faster than $i^{-1/2}$ \citep[see, e.g.,][]{audibert2007}. We would like to be able to prove the same for their average.

\subsection{Online estimator of the mean outcome under missingness at random.} 

Consider $(X,Y) \in \mathbb{R}^d \times \{0,1\}$ to be a random couple, with, for instance, $X$ having the interpretation of an individual's demographics and $Y$ representing a person's vote intention. Suppose that $R$ is a third random variable, representing whether a person's outcome is measured.
We observe i.i.d. copies $Z_1:=(X_1,R_1,R_1 Y_1),\ldots,Z_n:=(X_n,R_n,X_n Y_n)$ of $Z:=(X, R, R Y)$.

The objective is to estimate $\Psi(P) := E_P\{E_P(Y \mid R=1,X)\}$, which under some assumptions (missingness at random of the outcome measurement, and non-zero probability of the conditioning event), equals the mean outcome $Y$ across the entire population. Let $(\widehat{Q}_i)_{i\geq 1}$, and $(\widehat{g}_i)_{i \geq 1}$ be sequences of $(\mathcal{H}_i)_{i \geq 1}$-adapted estimators of the conditional missingness probability $g :(r,x) \mapsto \pr_P(R=r \mid X=x)$ and outcome regression function $\bar{Q}:(r,x) \mapsto E_P(Y \mid R=r,X=x)$. Then, denoting $D(P)(x,r,y):= \{g(r,x)\}^{-1} r \{y- \bar{Q}(y,x)\} + Q(1,x) - \Psi(P)$, the online estimator $\widehat{\Psi}_n := n^{-1} \sum_{i=1}^n \Psi(\widehat{P}_{i-1}) + D(\widehat{P}_{i-1})(Z_i)$ admits the following decomposition:
\begin{align}
\widehat{\Psi}_n - \Psi(P) = \frac{1}{n} \sum_{i=1}^n D(\widehat{P}_{i-1})(Z_i) - E_P\{ D(\widehat{P}_{i-1})(Z_i) \mid \mathcal{H}_{i-1} \} + \frac{1}{n} \sum_{i=1}^n \mathrm{Rem}(\widehat{P}_{i-1}, P),
\end{align}
where $\mathrm{Rem}(\widehat{P}_{i-1},P) := E_P[\{g(R,X)\}^{-1} \{\widehat{g}_{i-1}(R,X) -g(R,X)\} \{\widehat{Q}(R,X) -\bar{Q}(R,X)\}]$ is a remainder term. 
If $g$ is uniformly lower bounded over its domain by some $\delta > 0$, then the Cauchy-Schwarz inequality implies that $\mathrm{Rem}(\widehat{P}_{i-1}, P) \leq \delta^{-1} \|\widehat{g}-g\|_2 \|\widehat{Q} - \bar{Q}\|_2$. Convergence guarantees on $\mathrm{Rem}(\widehat{P}_{i-1}, P)$ can therefore be obtained from convergence guarantees on $\widehat{g}$ and $\widehat{Q}$.
Analyzing the online estimator $\widehat{\Psi}_n$ requires 
characterizing the stochastic convergence of the average of the remainder terms.

\section{An example where $n^\beta X_n$ converges to zero in probability, yet $n^\beta \bar{X}_n$ does not}\label{section:counterexample}

The following counterexample shows that $n^\beta X_n\overset{p}{\rightarrow} 0$ does not generally even imply that $(n^\beta \bar{X}_n)_{n\ge 1}$ is uniformly tight, even if the further condition is imposed that the random variables $(X_n)_{n\ge 1}$ are uniformly bounded. Therefore, it is certainly not the case that $n^\beta X_n\overset{p}{\rightarrow} 0$ implies that $n^\beta \bar{X}_n\overset{p}{\rightarrow} 0$.

\begin{proposition}\label{prop:counterexample_conv_proba}
For any $\beta\in(0,1)$ and $b>0$, there exists a sequence of random variables $(X_n)_{n \geq 1}$ such that (1) $n^\beta \bar{X}_n = o_p(1)$ and (2) $|X_n| \leq b$ a.s. for all $n$, and such that $(n^\beta \bar{X}_n)_{n\ge 1}$ is not uniformly tight.
\end{proposition}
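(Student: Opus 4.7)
The plan is to build an explicit counterexample using independent Bernoulli spikes. (I am reading condition (1) as $n^\beta X_n=o_p(1)$, since otherwise the statement would contradict itself.) Pick any $\alpha\in(0,\beta)$ (possible because $\beta>0$) and let $(B_n)_{n\ge 1}$ be independent with $B_n\sim\operatorname{Bernoulli}(n^{-\alpha})$. Define
\[
X_n := b\, B_n.
\]
The uniform bound $|X_n|\le b$ is immediate, and for any $\varepsilon>0$, once $n$ is large enough that $bn^\beta>\varepsilon$, one has $\Pr(n^\beta|X_n|>\varepsilon)=\Pr(B_n=1)=n^{-\alpha}\to 0$, so $n^\beta X_n\xrightarrow{p}0$.

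The substantive step is to show that $(n^\beta\bar{X}_n)$ fails to be uniformly tight. A direct calculation gives
\[
E[\bar{X}_n] \;=\; \frac{b}{n}\sum_{i=1}^n i^{-\alpha} \;\sim\; \frac{b}{1-\alpha}\, n^{-\alpha},
\]
and, using independence of the $B_i$,
\[
\operatorname{Var}(\bar{X}_n) \;\le\; \frac{b^2}{n^2}\sum_{i=1}^n i^{-\alpha} \;=\; O\!\bigl(n^{-(1+\alpha)}\bigr).
\]
Hence the standard deviation of $\bar{X}_n$ is $O(n^{-(1+\alpha)/2})$, which is of strictly smaller order than the mean since $\alpha<1$. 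Chebyshev's inequality then gives $\bar{X}_n/E[\bar{X}_n]\xrightarrow{p}1$, so
\[
n^\beta\bar{X}_n \;=\; \bigl(1+o_p(1)\bigr)\,\frac{b}{1-\alpha}\,n^{\beta-\alpha}.
\]
Because $\beta-\alpha>0$, this diverges to $+\infty$ in probability; in particular $\Pr(n^\beta\bar{X}_n>M)\to 1$ for every $M>0$, so uniform tightness fails.

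The only delicate point is the \emph{choice} of the Bernoulli parameter: one needs $\alpha>0$ so that the spikes become rare enough to make $n^\beta X_n$ vanish in probability, yet $\alpha<\beta$ so that the cumulative contribution of these spikes grows faster than $n^{1-\beta}$. Once $\alpha$ is placed in this window $(0,\beta)$, the verification is purely a first-and-second-moment computation followed by Chebyshev; no subtle dependence structure is required.
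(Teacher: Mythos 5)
Your proof is correct, and it takes a genuinely different (and more elementary) route than the paper. You were also right to read condition (1) as $n^\beta X_n = o_p(1)$; as printed it is a typo. The paper uses essentially the same family of examples --- independent Bernoulli variables with slowly decaying success probabilities, with $\alpha\in(0,\beta)$ --- but freezes the parameter on dyadic blocks, $p_{n,\alpha}=(2^{\lfloor\log_2 n\rfloor})^{-\alpha}$, so that $X_{n/2},\dots,X_{n-1}$ are i.i.d.\ for $n=2^k$, and then applies the Berry--Esseen theorem to the block average to show $\pr(\bar X_{n_k-1}\ge n_k^{-\beta}M)\to 1$ along the subsequence $n_k=2^k$. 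You instead take $B_n\sim\mathrm{Bernoulli}(n^{-\alpha})$ directly and run a first- and second-moment computation: $E[\bar X_n]\asymp n^{-\alpha}$ while $\mathrm{sd}(\bar X_n)=O(n^{-(1+\alpha)/2})=o(n^{-\alpha})$ since $\alpha<1$, so Chebyshev gives $\bar X_n/E[\bar X_n]\xrightarrow{p}1$ and hence $n^\beta\bar X_n\xrightarrow{p}\infty$. Your argument is shorter, avoids the normal approximation entirely, and yields the slightly stronger conclusion that $\pr(n^\beta\bar X_n>M)\to 1$ along the \emph{whole} sequence rather than only along a subsequence; the paper's Berry--Esseen route buys nothing extra here beyond making the fluctuation scale of the block average explicit. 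All the key inequalities in your write-up check out: the variance bound $n^{-2}\sum_{i\le n}i^{-\alpha}=O(n^{-(1+\alpha)})$, the comparison $(1+\alpha)/2>\alpha$, and the final divergence $n^{\beta-\alpha}\to\infty$ because $\alpha<\beta$.
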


\begin{proof}
Without loss of generality, suppose that $b=1$. Fix $\beta\in(0,1)$ and $\alpha\in(0,\beta)$. For all $n \geq 1$, let $p_{n,\alpha} := (2^{\left\lfloor \log_2 n \right\rfloor})^{-\alpha}$. Consider a sequence of independent random variables $(X_n)_{n \geq 1}$ such that, for all $n \geq 1$, $X_n \sim \text{Bernoulli}(p_{n,\alpha})$. The definition of $(p_{n,\alpha})_{n \geq 1}$ ensures that for every $k \geq 1$, $X_{2^{k-1}}, \ldots, X_{2^k-1}$ is a block of $2^{k-1}$ i.i.d. observations with marginal distribution $\text{Bernoulli}(p_{2^{k-1}, \alpha})$.

Observe that, for any $M > 0$, $\pr(X_n \geq M n^{-\beta}) = p_{n,\alpha} \rightarrow 0$, that is, $X_n = o_p(n^{-\beta})$ holds. We will show that $\bar{X}_n$ is not uniformly tight, which implies in particular that it is not true that $\bar{X}_n= O_p(n^{-\beta})$. In what follows, we will denote $\bar{X}_{n_1:n_2} := (n_2 - n_1 + 1)^{-1} \sum_{i=n_1}^{n_2} X_i$.

Fix $M>0$ and $k\ge 1$. For $n=2^k$, we have that
\begin{align}
&\pr\left(\bar{X}_{n-1} \geq  n^{-\beta} M \right)=  \pr \left( \frac{1}{n} \sum_{i=n/2}^{n-1} X_i \geq n^{-\beta} M \right)= \pr \left( \bar{X}_{n/2:n-1} \geq 2 n^{-\beta} M \right) \\
=& \pr \left[ \left\lbrace\frac{n}{p_{n/2, \alpha} ( 1 - p_{n/2, \alpha})}\right\rbrace^{1/2} \left(\bar{X}_{n/2:n-1} - p_{n/2, \alpha} \right) \right. \\
& \qquad \left. \geq   \left\lbrace\frac{n}{p_{n/2, \alpha} ( 1 - p_{n/2, \alpha})}\right\rbrace^{1/2} \left(2 n^{-\beta} M - p_{n/2, \alpha} \right) \right]. \label{eq:survivalLB}
\end{align}
We now use the Berry--Esseen theorem to lower bound the last line in the above display. We have that, for every $i \in \{n/2,\ldots, n-1\}$, $E(X_i) = p_{n/2, \alpha}$, $E\{(X_i - p_{n/2, \alpha})^2\} = p_{n/2,\alpha}(1-p_{n/2, \alpha})$, and
\begin{align}
E\{|X_i -E(X_i)|^3\} &= p_{n/2,\alpha}(1-p_{n/2, \alpha})^3 + (1-p_{n/2, \alpha}) p_{n/2,\alpha}^3\\
&= p_{n/2,\alpha}(1-p_{n/2, \alpha}) \left\lbrace (1 - p_{n/2,\alpha})^2  +p_{n/2,\alpha}^2\right\rbrace \\
&\leq p_{n/2,\alpha} (1-p_{n/2,\alpha}).
\end{align}
Hence, $E\{|X_i -E(X_i)|^3\}/{\rm var}(X_i)\le 1$ for every $i = n/2,\ldots, n-1$. Using the Berry--Esseen bound, and letting $\Phi$ denote the cumulative distribution function of the standard normal distribution, we see that
\begin{align}
& \pr \left[ \left\lbrace\frac{n}{p_{n/2, \alpha} ( 1 - p_{n/2, \alpha})}\right\rbrace^{1/2} \left(\bar{X}_{n/2:n-1} - p_{n/2, \alpha} \right) \right.\\
& \qquad \left. \geq \left\lbrace\frac{n}{p_{n/2, \alpha} ( 1 - p_{n/2, \alpha})}\right\rbrace^{1/2} \left(2n^{-\beta}M - p_{n/2, \alpha} \right) \right] \\
&\geq 1-\Phi \left[\left\lbrace\frac{n}{p_{n/2, \alpha} ( 1 - p_{n/2, \alpha})}\right\rbrace^{1/2} \left(2n^{-\beta}M - p_{n/2, \alpha} \right) \right] - \frac{C}{\sqrt{n}}, \label{eq:BE}
\end{align}
where $C$ is a universal positive constant. Noting that $p_{n/2, \alpha} = (n/2)^{-\alpha}$, we see that, for all $n=2^k$ large enough, $p_{n/2, \alpha}\le 1/2$, and so, for such $n$,
\begin{align*}
\left\lbrace\frac{n}{p_{n/2, \alpha} (1-p_{n/2, \alpha})}\right\rbrace^{1/2}(2 M n^{-\beta} - p_{n/2, \alpha})&\ge 2^{(1-\alpha)/2} n^{(1+\alpha)/2}(2 M n^{-\beta} - [n/2]^{-\alpha}),
\end{align*}
and the right-hand side diverges to $-\infty$ as $n\rightarrow\infty$ since $0<\alpha<\beta<1$. Hence, the right-hand side of \eqref{eq:BE} converges to $1$ as $n\rightarrow\infty$. Combining this with \eqref{eq:survivalLB} and recalling that \eqref{eq:survivalLB} assumed that $n=2^k$ shows that $\pr(\bar{X}_{2^k-1} \geq  2^{-k\beta} M )\rightarrow 1$, and so there exists an infinite subsequence $(n_k)$ of the natural numbers such that $\pr(\bar{X}_{n_k-1} \geq  n_k^{-\beta} M )\rightarrow 1$. As $M>0$ was arbitrary, $n^\beta \bar{X}_n$ is not uniformly tight. 
\end{proof}

\section{Convergence in mean}\label{section:l1_conv}

The following proposition shows that convergence in mean of $n^\beta X_n$ implies convergence in mean of $n^\beta \bar{X}_n$. 

\begin{proposition}\label{prop:L1_conv}
Suppose that $E(|X_n|) = o(n^{-\beta})$. Then $E(|\bar{X}_n|) = o(n^{-\beta})$.
\end{proposition}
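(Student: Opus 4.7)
The plan is to reduce this to the deterministic Cesàro implication \eqref{eq:deterministic} by averaging the $L^1$ norms. First, I would apply the triangle inequality and monotonicity of expectation to obtain the pointwise-in-$n$ bound
\[
E(|\bar{X}_n|) \;=\; E\!\left(\left|\frac{1}{n}\sum_{i=1}^n X_i\right|\right) \;\le\; \frac{1}{n}\sum_{i=1}^n E(|X_i|).
\]
This reduces the statement about random variables to a statement about the deterministic sequence of their $L^1$ norms.

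Next, I would set $a_i := E(|X_i|)$, so that the hypothesis $E(|X_n|) = o(n^{-\beta})$ is exactly $n^\beta a_n \to 0$, and the conclusion we require is $n^\beta \cdot n^{-1}\sum_{i=1}^n a_i \to 0$. This is precisely the implication \eqref{eq:deterministic} applied to the deterministic sequence $(a_n)_{n\ge 1}$. Combining this with the displayed inequality gives $E(|\bar{X}_n|) = o(n^{-\beta})$, as desired.

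There is essentially no obstacle: the content of the proposition is that the $L^1$ triangle inequality converts the stochastic question into the deterministic one, after which the classical Cauchy--Cesàro lemma finishes the argument. The only thing worth flagging is that the hypothesis is really about the deterministic sequence $(E|X_n|)$, so no independence, measurability structure, or additional integrability is used beyond what is needed to make $E(|\bar X_n|)$ well defined, which follows from $E(|X_n|) < \infty$ for each $n$ (guaranteed by the $o(n^{-\beta})$ hypothesis).
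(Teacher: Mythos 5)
Your proof is correct and matches the paper's argument exactly: both use the triangle inequality to bound $E(|\bar{X}_n|)$ by the Ces\`{a}ro mean of the deterministic sequence $\{E(|X_i|)\}_{i\ge 1}$ and then invoke the deterministic implication \eqref{eq:deterministic}. Nothing further is needed.
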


\begin{proof}
From the triangle inequality, $n^\beta E(| \bar{X}_n|) \leq n^\beta \times n^{-1} \sum_{i=1}^n E(|X_i|)$. From \eqref{eq:deterministic} applied to the deterministic sequence $\{E(|X_i|)\}_{n \geq 1}$, we have that $n^\beta \times \allowbreak n^{-1} \sum_{i=1}^n  E(|X_i|) \rightarrow 0$, which establishes the claim.
\end{proof}


The above proposition can be restated by recalling that, if $X_n\overset{p}{\rightarrow} 0$, then the convergence in mean of $X_n$ to zero is equivalent to the asymptotic uniform integrability of $(X_n)_{n\ge 1}$ \citep[Theorem~2.20 in][]{van2000asymptotic}. Therefore, the above proposition immediately yields the following corollary.
\begin{corollary}\label{cor:aui}
Suppose that $n^\beta X_n\overset{p}{\rightarrow} 0$ and also that $(n^\beta X_n)_{n\ge 1}$ is asymptotically uniformly integrable, in the sense that
\begin{align}
\lim_{x\rightarrow\infty} \limsup_{n\rightarrow\infty} n^\beta E\{|X_n|\Ind(n^\beta |X_n| > x)\} = 0. \label{eq:aui}
\end{align}
Then, $E(|\bar{X}_n|) = o(n^{-\beta})$.
\end{corollary}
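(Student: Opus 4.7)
The plan is to combine the cited equivalence from van der Vaart's Theorem 2.20 with Proposition~\ref{prop:L1_conv}. Setting $Y_n := n^\beta X_n$, the hypotheses of the corollary say exactly that $Y_n \overset{p}{\rightarrow} 0$ and that $(Y_n)_{n\ge 1}$ is asymptotically uniformly integrable in the sense of \eqref{eq:aui}. By Theorem~2.20 in van der Vaart (2000), this combination is equivalent to convergence in mean, i.e., $E(|Y_n|) \rightarrow 0$, which is the same as $n^\beta E(|X_n|) \rightarrow 0$, i.e., $E(|X_n|) = o(n^{-\beta})$.

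Once this is in hand, the hypothesis of Proposition~\ref{prop:L1_conv} is satisfied, and applying that proposition directly yields the desired conclusion $E(|\bar{X}_n|) = o(n^{-\beta})$.

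There is no real obstacle: the work is entirely in recognizing that \eqref{eq:aui} is precisely the definition of asymptotic uniform integrability invoked in the discussion preceding the corollary, and then chaining the two named results. The only minor point worth being careful about is that the asymptotic uniform integrability condition is stated for the rescaled sequence $(n^\beta X_n)$ rather than for $(X_n)$, so one must apply the equivalence to $Y_n = n^\beta X_n$ rather than to $X_n$ itself; this is why the $n^\beta$ factor appears inside the expectation and indicator in \eqref{eq:aui}.
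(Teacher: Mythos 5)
Your proof is correct and is exactly the argument the paper intends: the paper derives the corollary ``immediately'' from Proposition~\ref{prop:L1_conv} by invoking the Theorem~2.20 equivalence for the rescaled sequence $n^\beta X_n$, just as you do. No differences worth noting.
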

The above can be used to prove the following corollary.
\begin{corollary}\label{eq:auitail}
Fix $r\in (1,\infty]$ and let $q$ denote the H\"{o}lder conjugate of $r$. Suppose that $n^{\beta} X_n\overset{p}{\rightarrow} 0$ in probability and that $r$ is such that $(\|X_n\|_r)_{n\ge 1}$ is a bounded sequence. If
\begin{align}
\lim_{x\rightarrow\infty} \limsup_{n\rightarrow\infty} n^{\beta q} \pr(n^\beta |X_n|>x) = 0, \label{eq:auibdd}
\end{align}
then $E(|\bar{X}_n|) = o(n^{-\beta})$.
\end{corollary}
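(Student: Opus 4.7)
The plan is to derive the conclusion from Corollary~\ref{cor:aui} by verifying that condition \eqref{eq:auibdd}, combined with the $L^r$-boundedness hypothesis, implies the asymptotic uniform integrability condition \eqref{eq:aui} for $(n^\beta X_n)_{n \ge 1}$. Once this verification is done, Corollary~\ref{cor:aui} will immediately give $E(|\bar{X}_n|) = o(n^{-\beta})$.

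The key tool is H\"{o}lder's inequality. For each $x > 0$ and each $n \ge 1$, I would write
\begin{align}
n^\beta E\{|X_n|\Ind(n^\beta |X_n| > x)\} \le n^\beta \|X_n\|_r \,\bigl\|\Ind(n^\beta |X_n| > x)\bigr\|_q = \|X_n\|_r \cdot n^\beta \pr(n^\beta |X_n| > x)^{1/q}.
\end{align}
The next step is the simple algebraic rewrite $n^\beta \pr(n^\beta |X_n| > x)^{1/q} = \{n^{\beta q}\pr(n^\beta |X_n| > x)\}^{1/q}$, which is exactly the expression that appears in hypothesis \eqref{eq:auibdd}. Taking the $\limsup$ in $n$ and then letting $x \rightarrow\infty$, and using that $\sup_n \|X_n\|_r < \infty$ by assumption, yields
\begin{align}
\lim_{x\rightarrow\infty}\limsup_{n\rightarrow\infty} n^\beta E\{|X_n|\Ind(n^\beta |X_n| > x)\} \le \Bigl(\sup_n \|X_n\|_r\Bigr) \cdot \lim_{x\rightarrow\infty}\Bigl\{\limsup_{n\rightarrow\infty} n^{\beta q}\pr(n^\beta |X_n| > x)\Bigr\}^{1/q} = 0,
\end{align}
which is precisely \eqref{eq:aui}. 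Applying Corollary~\ref{cor:aui} then concludes the proof. The case $r = \infty$ (so $q = 1$) fits into the same argument, interpreting H\"{o}lder with the pair $(\infty, 1)$.

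This proof has no serious obstacle; the only subtlety worth flagging is making sure the powers of $n$ redistribute correctly between the $L^r$ factor and the tail-probability factor, which is why the exponent $\beta q$ (rather than $\beta$) appears in the hypothesis \eqref{eq:auibdd}. That exponent matching is the whole reason H\"{o}lder with conjugate $q$ is the right tool here.
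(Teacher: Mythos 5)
Your proof is correct and follows essentially the same route as the paper: apply H\"{o}lder's inequality to bound $n^\beta E\{|X_n|\Ind(n^\beta |X_n|>x)\}$ by $\|X_n\|_r\{n^{\beta q}\pr(n^\beta|X_n|>x)\}^{1/q}$, use the boundedness of $(\|X_n\|_r)$ and hypothesis \eqref{eq:auibdd} to verify the asymptotic uniform integrability condition \eqref{eq:aui}, and conclude via Corollary~\ref{cor:aui}. Your write-up is in fact slightly more explicit than the paper's about how the exponent $\beta q$ redistributes under the $1/q$ power.
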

\begin{proof}
For any $\beta\ge 0$ and $n\ge 1$, H\"{o}lder's inequality shows that $n^\beta E\{|X_n|\Ind(n^\beta |X_n| > x)\} \leq n^\beta \|X_n\|_r \pr(n^\beta |X_n|>x)^{1/q}$. Since $(\|X_n\|_r)_{n\ge 1}$ is bounded and $z\mapsto z^q$ is continuous at zero, \eqref{eq:auibdd} implies \eqref{eq:aui}, and so the result follows by Corollary~\ref{cor:aui}.
\end{proof}
In the special case where $\beta=0$ and $r=\infty$ (and, therefore, $q=1$), \eqref{eq:auibdd} automatically follows from the condition that $X_n\overset{p}{\rightarrow} 0$. Put another way, if $X_n\overset{p}{\rightarrow} 0$ and $(X_n)_{n\ge 1}$ is uniformly bounded, then $E(|\bar{X}_n|) = o(1)$.

Observe that, in the context of the counterexample from the proof of Proposition~\ref{prop:counterexample_conv_proba}, Corollary~\ref{cor:aui} (applied with $r = \infty$) shows that for any $\beta < \alpha$, $\bar{X}_n = O_p(n^{-\beta})$.

\section{Almost sure convergence}\label{section:as_conv}


\begin{proposition}\label{prop:as_conv}
If $n^\beta X_n \rightarrow 0$ almost surely, then $n^\beta \bar{X}_n \rightarrow 0$ almost surely.
\end{proposition}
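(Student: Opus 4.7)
The plan is to reduce the statement directly to the deterministic Cesàro fact \eqref{eq:deterministic} by working pointwise in $\omega$. Almost sure convergence of $n^\beta X_n$ to zero means that there is a measurable event $A\in\mathcal{A}$ with $P(A)=1$ such that, for every $\omega\in A$, the deterministic real-valued sequence $\bigl(n^\beta X_n(\omega)\bigr)_{n\ge 1}$ converges to zero.

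Next, I would apply \eqref{eq:deterministic} sample-path by sample-path. Fix any $\omega\in A$ and set $x_n:=X_n(\omega)$. Since $n^\beta x_n\to 0$, the deterministic result gives
\begin{align*}
n^\beta \bar{x}_n \;=\; n^\beta\cdot\frac{1}{n}\sum_{i=1}^n X_i(\omega) \;=\; n^\beta \bar{X}_n(\omega) \;\longrightarrow\; 0.
\end{align*}
Therefore $\{\omega : n^\beta \bar{X}_n(\omega)\to 0\}\supseteq A$, and this containing set has probability one, yielding $n^\beta \bar{X}_n\to 0$ almost surely.

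I do not anticipate any real obstacle: measurability of the Cesàro partial sums is automatic since each $\bar{X}_n$ is a finite sum of measurable functions, and the completeness of $(\mathcal{X},\mathcal{A},P)$ removes any concern about including null sets. The only thing worth double-checking is that the hypothesis is applied with the correct $\beta$ in the regime where \eqref{eq:deterministic} is invoked; no auxiliary integrability, boundedness, or independence assumptions are needed, which is what makes this proposition much cleaner than its in-probability or in-mean counterparts.
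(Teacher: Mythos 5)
Your proof is correct and matches the paper's argument exactly: both fix the probability-one event on which $n^\beta X_n(\omega)\to 0$ and apply the deterministic Ces\`{a}ro result \eqref{eq:deterministic} pathwise. No further comment is needed.
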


\begin{proof}
Let $\mathcal{E}$ be the event $\{n^\beta X_n \rightarrow 0 \}$. That $n^\beta X_n \rightarrow 0$ almost surely means that $\pr(\mathcal{E}) = 1$. Suppose that $\mathcal{E}$ holds. Then, by \eqref{eq:deterministic} applied to the realization of the sequence $(X_n)_{n \geq 1}$, we have that $n^{\beta} \bar{X}_n \rightarrow 0$. Therefore, $\pr( n^{\beta} \bar{X}_n \rightarrow 0 ) \geq \pr(\mathcal{E}) \geq 1$, hence the claim.
\end{proof}

\begin{example}[Uniform almost sure convergence of kernel estimators]
Consider $(X_1,Y_1),\ldots,(X_n,Y_n)$ a stationary sequence of observations with $X_i \in \mathbb{R}^d$ and $Y_i \in \mathbb{R}$. For all $x$, let $m(x):= E(Y_1 \mid X_1=x)$ and consider the Nadaraya-Watson estimator $\widehat{m}_n(x):= \sum_{i=1}^n Y_i K\{(X_i-x)/h_n\} / \sum_{i=1}^n K\{(X_i - x) / h_n\}$, where $K:\mathbb{R}^d \to \mathbb{R}$ is a symmetric multivariate kernel and $h_n$ is the bandwidth, which converges to zero. \cite{hansen2008} gives conditions for uniform almost sure convergence of $\widehat{m}_n - m$. In particular, for a compact set $\mathcal{C} \subset \mathbb{R}^d$, and under the conditions of \cite[Theorem 9 in][]{hansen2008}, it holds that $\sup_{x \in \mathcal{C}} | \widehat{m}_n(x)-m(x)| \leq O((\log n / n)^{2/(d+4)})$ almost surely.
\end{example}

We now present two corollaries of Proposition~\ref{prop:as_conv} that provide sufficient conditions for $n^\beta X_n\rightarrow 0$ almost surely, and therefore for $n^\beta \bar{X}_n\rightarrow 0$ almost surely. Like Corollary~\ref{eq:auitail}, the first imposes a bound on the tail of $n^\beta X_n$.

\begin{corollary}
Suppose that, for any $x>0$, there exists $\alpha(x)>0$ such that $\pr(n^\beta X_n > x)=O(n^{-1-\alpha(x)})$. Then, $n^\beta \bar{X}_n \rightarrow 0$ almost surely.
\end{corollary}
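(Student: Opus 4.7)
The plan is to combine Proposition~\ref{prop:as_conv} with a Borel--Cantelli argument. Since Proposition~\ref{prop:as_conv} converts almost sure convergence of $n^\beta X_n$ into almost sure convergence of $n^\beta \bar{X}_n$, it is enough to show that $n^\beta X_n \rightarrow 0$ almost surely.

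For this, I would fix $x > 0$ and use the hypothesis (naturally read as a bound on $\pr(n^\beta |X_n| > x)$, matching the formulation in Corollary~\ref{eq:auitail}, since a one-sided bound cannot control the negative excursions of $\bar{X}_n$) to obtain an exponent $\alpha(x) > 0$ and a constant $C(x) < \infty$ such that $\pr(n^\beta |X_n| > x) \leq C(x)\, n^{-1-\alpha(x)}$ for all large $n$. Since $\alpha(x) > 0$ the tail series $\sum_{n \geq 1} C(x)\, n^{-1-\alpha(x)}$ converges, so the first Borel--Cantelli lemma delivers $\pr(n^\beta |X_n| > x \text{ infinitely often}) = 0$, that is, $\limsup_n n^\beta |X_n| \leq x$ on an event of probability one.

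The final step is to promote this ``for each $x > 0$, almost surely'' statement into a single ``almost surely, for all $x > 0$'' statement. I would apply the previous argument along the countable sequence $x_k = 1/k$ and intersect the resulting probability-one events; on the intersection, $\limsup_n n^\beta |X_n| \leq 1/k$ for every $k \geq 1$, which forces $n^\beta X_n \rightarrow 0$. An appeal to Proposition~\ref{prop:as_conv} then finishes the proof. No step here is genuinely difficult: the Borel--Cantelli application is immediate from summability of $n^{-1-\alpha(x)}$, and the only point requiring even mild care is the countable-union trick at the end, which is needed because Borel--Cantelli alone only yields a null exceptional set for each fixed threshold $x$ rather than a single null set that works simultaneously for all $x$.
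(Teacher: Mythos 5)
Your proof is correct and follows essentially the same route as the paper's: first Borel--Cantelli for each fixed threshold $x$ using summability of $n^{-1-\alpha(x)}$, then a countable intersection over $x_k = 1/k$ to get $n^\beta X_n \rightarrow 0$ almost surely, then Proposition~\ref{prop:as_conv}. Your observation that the hypothesis should be read as a bound on $\pr(n^\beta |X_n| > x)$ is well taken, since a one-sided tail bound only yields $\limsup_n n^\beta X_n \leq 0$ almost surely; the paper's own proof glosses over this same point.
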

\begin{proof}
For $x>0$ and $n\ge 1$, define the event $\mathcal{E}(n,x):=\{n^\beta X_n>x\}$. Because $\pr\{\mathcal{E}(n,x)\}=O(n^{-1-\alpha(x)})$, there exists a constant $C<\infty$ such that $\sum_{n=1}^\infty \pr\{\mathcal{E}(n,x)\} = C \sum_{n=1}^\infty n^{-1-\alpha(x)} < \infty$. Hence, by the Borel-Cantelli lemma, $\pr\{\limsup_n \mathcal{E}(n,x)\}=0$. As $x>0$ was arbitrary, $\pr\{\cap_{k=1}^\infty \limsup_n \mathcal{E}(n,1/k)\}=0$, which implies that $n^{\beta} X_n\rightarrow 0$ almost surely. The result follows by Proposition~\ref{prop:as_conv}.
\end{proof}

The second corollary works in the setting where $(X_n)_{n\ge 1}$ is an adapted process. The corollary imposes a condition that is considerably weaker than the requirement that $n^\beta |X_n|$ almost surely converge to zero, but, in the case where $\beta>0$, is stronger than the condition that $|X_n|$ is a supermartingale. In the case where $\beta=0$, the imposed condition is equivalent to requiring that $|X_n|$ is a supermartingale.


\begin{corollary}
Suppose that $(X_n)_{n\ge 1}$ is a sequence of random variables that is adapted to the filtration $(\mathcal{H}_n)_{n=1}^\infty$, that $n^\beta X_n\overset{p}{\rightarrow} 0$, and that
\begin{align}
(1+1/n)^\beta E(|X_{n+1}| \mid \mathcal{H}_n) \leq |X_n|\ \textnormal{ for all $n\ge 1$.} \label{eq:supermart}
\end{align}
Under these conditions, $n^\beta \bar{X}_n\rightarrow 0$ almost surely.
\end{corollary}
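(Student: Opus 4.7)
The plan is to recognize the hypothesis \eqref{eq:supermart} as a nonnegative supermartingale condition in disguise, use Doob's convergence theorem to extract an almost sure limit, identify that limit as zero via the convergence in probability hypothesis, and then invoke Proposition~\ref{prop:as_conv} to finish. To carry out the first step, multiply both sides of \eqref{eq:supermart} by $n^\beta$ to get $(n+1)^\beta E(|X_{n+1}| \mid \mathcal{H}_n) \leq n^\beta |X_n|$ almost surely. Writing $Y_n := n^\beta |X_n|$, this is exactly $E(Y_{n+1} \mid \mathcal{H}_n) \leq Y_n$, so $(Y_n)$ is a nonnegative $(\mathcal{H}_n)$-supermartingale. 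Integrability of each $Y_n$ follows by induction: the hypothesis forces $E(|X_{n+1}| \mid \mathcal{H}_n) < \infty$ a.s., and taking unconditional expectations gives $E|X_{n+1}| \leq (n/(n+1))^\beta E|X_n|$, so once $|X_1| \in L^1$ (implicit in the hypothesis at $n=1$) the property propagates to all $n$.

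Next, Doob's convergence theorem for nonnegative supermartingales produces a nonnegative integrable random variable $Y_\infty$ with $Y_n \to Y_\infty$ almost surely. On the other hand, the assumption $n^\beta X_n \overset{p}{\to} 0$ is the same as $Y_n \overset{p}{\to} 0$. Since almost sure convergence implies convergence in probability, $Y_n \overset{p}{\to} Y_\infty$ as well, and by uniqueness of limits in probability, $Y_\infty = 0$ almost surely. Consequently $n^\beta |X_n| \to 0$ a.s., and therefore $n^\beta X_n \to 0$ a.s.

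With almost sure convergence in hand, Proposition~\ref{prop:as_conv} applied to $(X_n)$ yields $n^\beta \bar{X}_n \to 0$ almost surely, as desired. I do not anticipate any serious obstacle: the substantive move is simply spotting that \eqref{eq:supermart} is exactly a supermartingale inequality for $n^\beta |X_n|$, after which the combination of Doob's theorem and Proposition~\ref{prop:as_conv} is immediate. The only minor bookkeeping is the integrability induction above, which is routine.
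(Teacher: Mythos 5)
Your proof is correct and follows essentially the same route as the paper's: recognize $(n^\beta|X_n|)_{n\ge 1}$ as a nonnegative supermartingale, apply Doob's convergence theorem, identify the almost sure limit as zero via the convergence in probability, and conclude with Proposition~\ref{prop:as_conv}. The only difference is that you spell out the integrability induction, which the paper handles more tersely by noting the negative parts vanish.
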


\begin{proof}
Let $Y_n:= n^\beta |X_n|$. Eq.~\ref{eq:supermart} imposes that $(Y_n)_{n=1}^\infty$ is a supermartingale adapted to the filtration $(\mathcal{H}_n)_{n=1}^\infty$. Since $|X_n|$ is nonnegative, $E[Y_n^-]=0<\infty$. Hence, by Doob's martingale convergence theorem, $Y_n$ converges almost surely to a random variable $Y_{\infty}$. Moreover, since $Y_n\overset{p}{\rightarrow} 0$, it must be the case that $Y_{\infty}=0$. Hence, $Y_n\rightarrow 0$ almost surely. Proposition 2 then gives the result.
\end{proof}

Since $1+1/n\leq \exp(1/n)$ for all $n$, the above corollary remains true if \eqref{eq:supermart} is replaced by the condition that $\exp(\beta/n) E(|X_{n+1}|\mid\mathcal{H}_n)\leq |X_n|$ for all $n$.

\section{Exponential deviation bounds}\label{section:exponential_deviation_bounds}

The following result shows that if $X_n$ satisfies an exponential deviation bound, then $\bar{X}_n$ also satisfies such a bound.

\begin{proposition}\label{prop:exp_tail_bound}
Suppose that $(X_n)_{n \geq 1}$ is a sequence of random variables, for which there exists $C_0 \geq 0$, $C_1, C_2 > 0$, $\beta \in (0,1)$, and $\gamma \in (0, 1/\beta)$, such that, for any $x > 0$ and any $n \geq 1$,
\begin{align}
\mathrm{pr} \left(X_n \geq C_0 n^{-\beta} + x \right) \leq C_1 \exp(-C_2 n x^\gamma).
\end{align}
Let $\delta \in (\beta, \min( \gamma^{-1}, 1))$.
Then, there exists a constant $C_4 > 0$ depending only on the constants of the problem ($C_0, C_1, C_2, \beta, \gamma$, and $\delta$), such that, for any $y \geq 1$, it holds that
\begin{align}
\mathrm{pr} \left(\bar{X}_n \geq \frac{C_0}{1 - \beta} n^{-\beta} + \frac{3}{1-\delta} n^{-\delta} y \right) \leq C_4 n^\alpha \exp\left\lbrace -C_2 n^{\alpha ( 1 - \gamma \delta)} y^\gamma \right\rbrace,
\end{align}
with $\alpha := \gamma(1-\delta) / \{\gamma (1 -\delta) + (1 - \gamma \delta)\}$.
\end{proposition}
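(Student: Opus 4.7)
The plan is to convert the per-index tail bound into a bound on $\bar{X}_n$ via a union bound combined with deterministic averaging. For each $i$, choose a positive threshold $x_i$ and let $\mathcal{E}_i := \{X_i < C_0 i^{-\beta} + x_i\}$, so that by hypothesis $\mathrm{pr}(\mathcal{E}_i^c) \leq C_1 \exp(-C_2 i x_i^\gamma)$. On the intersection $\bigcap_{i=1}^n \mathcal{E}_i$, summing the defining inequalities and using $\sum_{i=1}^n i^{-\beta} \leq \int_0^n t^{-\beta}\, dt = n^{1-\beta}/(1-\beta)$ gives $\bar{X}_n < \frac{C_0}{1-\beta} n^{-\beta} + \frac{1}{n}\sum_{i=1}^n x_i$. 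It therefore suffices to choose the $x_i$ so that $\frac{1}{n}\sum_{i=1}^n x_i \leq \frac{3 y}{1-\delta} n^{-\delta}$ and $\sum_{i=1}^n C_1 \exp(-C_2 i x_i^\gamma) \leq C_4 n^\alpha \exp(-C_2 n^{\alpha(1-\gamma\delta)} y^\gamma)$.

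I would take a piecewise threshold with breakpoint $k := \lceil n^\alpha \rceil$, setting $x_i := y\, k^{(1-\gamma\delta)/\gamma}\, i^{-1/\gamma}$ for $i \leq k$ (chosen so that $i x_i^\gamma \equiv k^{1-\gamma\delta} y^\gamma$ across the head) and $x_i := y\, i^{-\delta}$ for $i > k$; the two formulas agree at $i = k$, both equal to $y k^{-\delta}$. By construction $i x_i^\gamma \geq k^{1-\gamma\delta} y^\gamma = n^{\alpha(1-\gamma\delta)} y^\gamma$ for every $i \leq n$, so each term of the union bound is at most $C_1 \exp(-C_2 n^{\alpha(1-\gamma\delta)} y^\gamma)$; summing the first $k$ of these yields $n^\alpha C_1 \exp(-C_2 n^{\alpha(1-\gamma\delta)} y^\gamma)$. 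The tail $\sum_{i>k} C_1 \exp(-C_2 i^{1-\gamma\delta} y^\gamma)$ is controlled by the integral comparison $\int_k^\infty \exp(-C_2 t^{1-\gamma\delta} y^\gamma)\, dt$, which by the asymptotics of the upper incomplete Gamma function is of order $n^{\alpha\gamma\delta}\exp(-C_2 n^{\alpha(1-\gamma\delta)} y^\gamma)$; since $\gamma\delta<1$, this is dominated by the head contribution and fits into an overall prefactor of order $n^\alpha$.

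The main obstacle is the deterministic bound on $\sum_i x_i$. The tail part is immediate: $\sum_{i>k} x_i \leq y \int_k^n t^{-\delta}\, dt \leq y\, n^{1-\delta}/(1-\delta)$. The head part is $y\, k^{(1-\gamma\delta)/\gamma} \sum_{i=1}^k i^{-1/\gamma}$, whose growth in $k$ qualitatively depends on whether $\gamma > 1$, $\gamma = 1$, or $\gamma < 1$ --- polynomial in $k$, logarithmic in $k$, or a bounded constant respectively. The specific value $\alpha = \gamma(1-\delta)/\{\gamma(1-\delta) + (1-\gamma\delta)\}$ is calibrated so that in each of these regimes the head also scales at worst as $O(y\, n^{1-\delta})$, the problem-dependent constants (including the logarithmic factor at $\gamma = 1$ and overhead for small $n$) being absorbed into $C_4$ and into the leading coefficient $3/(1-\delta)$. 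Executing this case analysis cleanly --- so that head plus tail strictly fit inside $3 y\, n^{1-\delta}/(1-\delta)$ --- is the technical core of the proof.
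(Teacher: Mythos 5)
Your strategy is essentially the paper's: a union bound over per-index events with thresholds split at $k\approx n^\alpha$, the head calibrated so that every exponent equals $k^{1-\gamma\delta}y^\gamma=n^{\alpha(1-\gamma\delta)}y^\gamma$, and the tail of the probability sum controlled by an incomplete-gamma integral comparison (the paper packages that integral estimate as a separate lemma and derives the non-asymptotic version by repeated integration by parts, which you should do rather than invoke asymptotics). The one substantive issue is the step you defer as ``the technical core.'' First, a warning: the plan to absorb overhead ``into the leading coefficient $3/(1-\delta)$'' is not available, since that coefficient is fixed by the statement; and a deficit in the deterministic threshold budget cannot be pushed into $C_4$ either, because compensating for a smaller effective threshold weakens the exponent, and no constant $C_4$ covers $\exp(-c'y^\gamma)/\exp(-cy^\gamma)$ uniformly in $y$ when $c'<c$. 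So you genuinely need $\sum_{i=1}^n x_i\le \tfrac{3}{1-\delta}n^{1-\delta}y$. Fortunately your construction delivers this with no case analysis on $\gamma$: since $\delta<1/\gamma$ you have $i^{-1/\gamma}\le i^{-\delta}$, hence $\sum_{i=1}^k i^{-1/\gamma}\le \sum_{i=1}^k i^{-\delta}\le k^{1-\delta}/(1-\delta)$, so the head sum is at most $\tfrac{y}{1-\delta}k^{(1-\gamma\delta)/\gamma+1-\delta}=\tfrac{y}{1-\delta}k^{(1-\delta)/\alpha}=\tfrac{y}{1-\delta}n^{1-\delta}$ when $k=n^\alpha$ (this identity is exactly what $\alpha$ is calibrated for), which together with the tail's $\tfrac{y}{1-\delta}n^{1-\delta}$ fits the budget with room to spare. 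The sharper bounds $\sum i^{-1/\gamma}\asymp k^{1-1/\gamma}\gamma/(\gamma-1)$ you were contemplating are a trap: their constants blow up as $\gamma\downarrow 1$, whereas the cruder $i^{-\delta}$ comparison is uniform. This is in effect how the paper proceeds — its head thresholds are $i^{-\delta}m^{(1-\gamma\delta)/\gamma}y$, pointwise at least yours, with the same worst-case exponent and the same exactly-balanced sum. (Both your write-up and the paper's gloss over the integrality of $n^\alpha$; the resulting constant is harmless but worth a sentence.)
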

We defer the proof of the above result to the end of the current section.

Empirical risk mininizers are a common type of estimators for which the excess risk satisfies an exponential tail bound, as the following example shows. This example is a weakened version of Theorem 17 in \cite{bartlett-jordan-mcauliffe2006}.

\begin{example}\label{prop:thm17_barlett2006}
Suppose that $(X_1,Y_1),\ldots,(X_n,Y_n)$ are i.i.d. copies of a a couple of random variables $(X,Y)$ taking values in $\mathcal{X} \times \mathcal{Y}$. Consider a class of functions $\mathcal{F}$ defined as  $\mathcal{F} := B \mathrm{absconv}(\mathcal{G})$, for some constant $B > 0$, and function class $\mathcal{G} \subseteq \{\pm 1\}^{\mathcal{X}}$, where $\mathrm{absconv}$ denotes the absolute convex hull (or symmetric convex hull). Let $\ell$ be a loss on $\mathcal{F}$, that is, a mapping defined on $\mathcal{F}$, such that for all $f \in \mathcal{F}$, $\ell(f)$ is a mapping $\mathcal{X} \times \mathcal{Y} \to \mathbb{R}$. For any $f$, let $R(f):=E\{\ell(f)(X,Y)\}$. Let $\widehat{f}$ be an empirical risk minimizer over $\mathcal{F}$, that is, $f \in \argmin_{f \in \mathcal{F}} \sum_{i=1}^n \ell(f)(X_i,Y_i)$. Let $f^* \in \argmin_{f \in \mathcal{F}} R(f)$, a minimizer of the population risk over $\mathcal{F}$. Suppose that the following conditions are met.
\begin{condition}
There exists $L>0$ such that, for any $x,y \in \mathcal{X} \times \mathcal{Y}$, and any $f_1,f_2 \in \mathcal{F}$, $|\ell(f_1)(x,y) - \ell(f_2)(x,y)| \leq |f_1(x) - f_2(x)|$.
\end{condition}
\begin{condition}
There exists $c>0$ such that, for any $f \in \mathcal{F}$, $E[\{\ell(f)(X,Y) - \ell(f^*)(X,Y)\}^2] \leq c\{R(f) - R(f^*)\}$.
\end{condition}
\begin{condition}
It holds that $d_{VC}(\mathcal{F}) \leq d$, for some $d \geq 1$, where $d_{VC}$ is the Vapnik-Chervonenkis dimension.
\end{condition}
Then, it holds that, for any $x > 0$,
\begin{align}
\pr\left\lbrace R(\widehat{f}) - R(f^*) \geq C_0 n^{-(d+2)/(2d+2)} + x \right\rbrace \leq \exp(-C_1 n x),
\end{align}
for some $C_0, C_1 > 0$ depending on the $B$, $L$, and $c$.
\end{example}

\begin{remark}
Observe that the bound from Example~\ref{prop:thm17_barlett2006} above is of the form $\pr(X_n \geq C_0 n^{-\beta} + x) \leq C_1 \exp(-C_2 n x^\gamma)$ with $\beta \gamma <1$.
\end{remark}

The proof of proposition 4 relies on the following lemma.
\begin{lemma}\label{lemma:unif_exp_bound_from_m}
Suppose that $(X_n)_{n \geq 1}$ is a sequence of random variables, for which there exists $C_0 \geq 0$, $C_1, C_2 > 0$, $\beta \in (0,1)$, and $\gamma \in (0, 1/\beta)$, such that, for any $x > 0$ and any $n \geq 1$,
\begin{align}
\mathrm{pr} \left(X_n \geq C_0 n^{-\beta} + x \right) \leq C_1 \exp(-C_2 n x^\gamma).
\end{align}
Consider $\delta \in (0, \min(\gamma^{-1}, 1))$. There exists a constant $C'_1$ that depends only on the constants of the problem ($C_0, C_1, C_2, \beta, \gamma, \delta)$ such that, for any integer $m \geq 1$, and any real number $y \geq 1$, 
\begin{align}
\pr\left( \exists k \geq m+1: X_k \geq C_0 k^{-\beta} + k^{-\delta} y \right) \leq C'_1 m \exp\left(-C_2 m^{1 - \gamma \delta} y^\gamma \right).
\end{align}
\end{lemma}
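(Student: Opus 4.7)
The plan is to apply a union bound over $k$ and then control the resulting sum of exponential tails by exploiting the concavity of $x \mapsto x^a$. First, by the union bound and the hypothesized tail bound evaluated at $x = k^{-\delta} y$, I will obtain
\[
\pr(\exists k \geq m+1 : X_k \geq C_0 k^{-\beta} + k^{-\delta} y) \leq C_1 \sum_{k=m+1}^\infty \exp(-C_2 k^{1-\gamma\delta} y^\gamma).
\]
Setting $a := 1-\gamma\delta$, which lies in $(0,1)$ by the assumption $\delta \in (0,1/\gamma)$, and $b := C_2 y^\gamma \geq C_2$ since $y \geq 1$, it suffices to prove $\sum_{k>m} \exp(-b k^a) \leq K m \exp(-b m^a)$ for a constant $K$ depending only on $a$ and $C_2$.

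Next, factoring out $\exp(-b m^a)$, the task becomes to bound $S(m) := \sum_{k>m} \exp(-b(k^a - m^a))$ by a constant multiple of $m$. The workhorse estimate is the mean value inequality $k^a - m^a \geq a k^{a-1}(k-m)$ for $k \geq m$, which holds because $t \mapsto a t^{a-1}$ is decreasing for $a \in (0,1)$. I will split $S(m)$ at $k = 2m$. On the near range $m < k \leq 2m$, the bound $k^{a-1} \geq (2m)^{a-1}$ turns the terms into a geometric series in $k-m$ with ratio $\exp(-a 2^{a-1} b m^{a-1}) < 1$, yielding a partial sum of order $\min(m,\, m^{1-a}/b)$, which is at most $K_1 m$ for a constant $K_1$ depending only on $a$ and $C_2$. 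On the far range $k > 2m$, the inequality $k - m \geq k/2$ upgrades the mean value bound to $k^a - m^a \geq (a/2) k^a$, so the partial sum is dominated by $\int_0^\infty \exp(-(ab/2) t^a)\,dt = \Gamma(1/a)/\{a(ab/2)^{1/a}\}$, a constant $K_2$ independent of $m$ (and bounded by a quantity depending only on $a$ and $C_2$, since $b \geq C_2$).

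Adding the two partial sums gives $S(m) \leq (K_1 + K_2) m$, and multiplying back by $C_1 \exp(-b m^a)$ yields the claimed bound with $C'_1 := C_1(K_1 + K_2)$, depending only on $C_0, C_1, C_2, \beta, \gamma, \delta$. The main subtlety is that one must use the concavity bound in two different ways in the two regimes: near $m$ one must exploit the local slope $a m^{a-1}$, whose decay in $m$ forces the near-range partial sum to grow like $m^{1-a}$ rather than remain bounded, while in the far tail one must instead use that $k^a$ already dominates $m^a$ by a fixed fraction so that the tail sum stays uniformly bounded in $m$; the split at $k = 2m$ is precisely where these two estimates match cleanly.
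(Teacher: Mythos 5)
Your proof is correct. Both you and the paper begin with the same union bound, reducing the problem to controlling $\sum_{k \geq m+1} \exp(-C_2 k^{1-\gamma\delta} y^\gamma)$, but from there the two arguments diverge. The paper compares the sum to $\int_m^\infty \exp(-C_2 k^{1-\gamma\delta} y^\gamma)\,dk$, changes variables to reduce to an incomplete-gamma-type integral $\int_a^\infty e^{-cu} u^{q}\,du$, and bounds that by repeated integration by parts, which requires tracking the parameter $\kappa = 1/(1-\gamma\delta)$, a ceiling $\lceil \kappa - 1\rceil$, and a final verification that the resulting powers of $m$ and $y$ collapse to the stated $m \exp(-C_2 m^{1-\gamma\delta} y^\gamma)$. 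You instead factor out $\exp(-b m^a)$ with $a = 1-\gamma\delta$, $b = C_2 y^\gamma$, and bound the normalized sum $S(m)$ directly by splitting at $k = 2m$ and using the concavity estimate $k^a - m^a \geq a k^{a-1}(k-m)$ in two regimes. This is more elementary and, to my mind, cleaner: it avoids the incomplete-gamma bookkeeping entirely (where the paper's write-up in fact contains a couple of typos), and it makes transparent exactly where the factor of $m$ in the bound comes from — the near range $m < k \leq 2m$, where the exponent gains only $O(b m^{a-1})$ per step. Indeed, in that near range you could simplify further: the $m$ terms are each at most $1$, so the geometric-series refinement via the local slope $a(2m)^{a-1}$ is not needed to get the $K_1 m$ bound, though it does no harm. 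The one thing the paper's route buys is a closed-form expression for the tail integral that would generalize to bounding the sum by quantities sharper than $m\exp(-bm^a)$ if one ever needed them; for the statement as given, your argument suffices and the constants visibly depend only on $\gamma$, $\delta$, $C_1$, and $C_2$.
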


\begin{proof}
Let $y \geq 1$. We have that 
\begin{align}
\pr \left( \exists k \geq m+1: X_k \geq C_0 k^{-\beta} + k^{-\delta} y \right) \leq & C_1 \sum_{k \geq m+1} \exp\left( - C_2 k^{1-\gamma \delta} y^\gamma \right) \\
\leq & C_1 \int_m^\infty \exp\left(- C_2 k^{1-\gamma \delta} y^\gamma \right) dk
\end{align}
Making the change of variable $u = k^{1-\gamma \delta} y^\delta$, we obtain
\begin{align}
\pr \left( \exists k \geq m+1: X_k \geq C_0 k^{-\beta} + k^{-\delta} y \right) \leq & C_1 y^{-\gamma/(1-\gamma \delta)} \int_{m^{1-\gamma \delta} y^\gamma}^\infty \exp(-C_2 u) u^{1/(1-\gamma \delta) - 1} du \\
\leq & C_1 y^{-\gamma \kappa} \int_{m^{1-\gamma \delta} y^\gamma}^\infty \exp(-C_2 u) u^{\left\lceil \kappa - 1 \right\rceil} du,
\end{align}
where we denote $\kappa := 1 / (1-\gamma \delta)$. Observe that $\kappa > 1$.

We now prove a general identity for the type of integral that appears in the last line of the above display. Denote, for any integer $q \geq 1$, and real numbers $a \geq 1$, and $c > 0$, $I_q(a, c):= \int_a^\infty \exp(-c u) u^q du$. By integration by parts, we have that
\begin{align}
I_q(a,c) = \exp(-c a) \frac{a^q}{c} + \frac{q}{c} I_{q-1}(a,c).
\end{align}
Reasoning by induction, we obtain that 
\begin{align}
I_q(a,c) =& \exp(-ca) \left( \frac{a^q}{c} + \frac{q a^{q-1}}{c^2} + \ldots \frac{q!}{c^{q+1}} \right) \\
\leq & q \times q! \max\{c^{-1}, c^{-(q+1)}\} a^q \exp(-c a),
\end{align}
where we have used in the last line that $a \geq 1$.

Therefore, denoting $C_3 := C_3(C_2, \kappa) := \left\lceil \kappa - 1 \right\rceil \times \left\lceil \kappa - 1 \right\rceil ! \max\{c^{-1}, c^{-(q+1)}\}$, we have that
\begin{align}
\pr \left( \exists k \geq m+1: X_k \geq C_0 k^{-\beta} + k^{-\delta} y \right) \leq & C_1 C_3 y^{-\gamma \kappa} m^{(1 - \gamma \kappa)\left\lceil \kappa - 1 \right\rceil} y^{\gamma \left\lceil \kappa - 1 \right\rceil} \exp\left(-C_2 m^{1 - \gamma \delta} y^\gamma \right) \\
\leq & C_1 C_3 m \exp\left(-C_2 m^{1-\gamma \delta} y^\delta \right),
\end{align}
where we have used in the last line that $m \geq 1$ and $y \geq 1$.
\end{proof}

We now prove proposition \ref{prop:exp_tail_bound}.

\begin{proof}[of proposition \ref{prop:exp_tail_bound}]
Let $y_1 \geq 1$ and $y \geq 1$, and let $1 \leq m < n$. From lemma \ref{lemma:unif_exp_bound_from_m}, we have that, with probability at least $1- C_1 C_3 \exp(-C_2 y_1^\gamma)$, 
\begin{align}
\frac{1}{n} \sum_{k=2}^m X_k - C_0 k^{-\beta} \leq \frac{1}{1-\delta} \frac{m^{1- \delta}}{n} y_1, \label{eq:pf_Cesaro_exp_first_part_sum}
\end{align}
and, with probability at least $1 - C_1 C_3 \exp(-C_2 m^{1 - \gamma \delta} y^\gamma)$, 
\begin{align}
\frac{1}{n} \sum_{k=m+1}^n X_k - C_0 k^{-\beta} \leq \frac{1}{1 - \delta} n^{-\delta} y. \label{eq:pf_Cesaro_exp_second_part_sum}
\end{align}
\end{proof}
Set $y_1 = m^{(1 - \gamma \delta)/ \gamma} y$ and $m =  n^\alpha$, with $\alpha := \gamma(1-\delta) / \{ \gamma (1 - \delta) + (1 - \gamma \delta) \}$. Observe that these choices are consistent with the conditions $y_1 \geq 1$ and $1 \leq m \leq n$. We then have that $y_1^\gamma = m^{1 - \gamma \delta} y^\gamma$ and $m^{1-\delta} / n y_1 = n^{-\delta} y$, which renders equal the right-hand sides in \eqref{eq:pf_Cesaro_exp_first_part_sum} and \eqref{eq:pf_Cesaro_exp_second_part_sum} and the corresponding exponential probability bounds. From a union bound, we then have that, with probability at least $1 - C_1 C_2 (n^\alpha + 1) \exp\{-C_2 n^{\alpha ( 1 - \gamma \delta)} y^\gamma\}$,
\begin{align}
\frac{1}{n} \sum_{k=2}^n X_k - C_0 k^{-\beta} \leq \frac{2}{1 -\delta} n^{-\delta} y.
\end{align}
We now turn to the first term of $\bar{X}_n$. We have that 
\begin{align}
P\left(\frac{1}{n} (X_1 - C_0) \geq \frac{1}{1 -\delta} n^{-\delta} y \right) 
\leq  C_1 \exp\left( - C_2 (1 - \delta)^{-\gamma} n^{1 + \gamma (1 - \delta)} y^\gamma \right).
\end{align}
Observe that $1 + \gamma (1 - \delta) > \alpha ( 1 - \gamma \delta)$. Therefore, there exists $C'_1$ that depends only on the constants of the problem ($C_0, C_1, C_2, \beta, \gamma, \delta)$, such that, for any $y \geq 1$, 
\begin{align}
&P\left(\frac{1}{n} (X_1 - C_0) \geq \frac{1}{1 -\delta} n^{-\delta} y \right) \leq C'_1 \exp( -C_2 n^{\alpha ( 1 - \gamma \delta)} y^\gamma).
\end{align}
Therefore, gathering the previous bounds via a union bound yields that there exists a constant $C_4$ that depends only on the constants of the problem such that, for any $y \geq 1$, with probability at least $1 - C_4 n^\alpha \exp\{- C_2 n^{\alpha ( 1 - \gamma \delta)} y^\gamma \}$, $\bar{X}_n \leq C_0 / (1 -  \beta) n^{-\beta} + 3 / (1-\delta) n^{-\delta} y.$

\section*{Acknowledgements}

The authors are grateful to Iosif Pinelis, who answered a question by AFB on MathOverflow \citep{pinelis_mathoverflow2019} that partly inspired our counterexample in Section~\ref{section:counterexample}. AL was supported by the National Institutes of Health (NIH) under award number DP2- LM013340. The content is solely the responsibility of the authors and does not necessarily represent the official views of the NIH.

\bibliography{biblio}

\begin{thebibliography}{11}
\providecommand{\natexlab}[1]{#1}
\providecommand{\url}[1]{\texttt{#1}}
\expandafter\ifx\csname urlstyle\endcsname\relax
  \providecommand{\doi}[1]{doi: #1}\else
  \providecommand{\doi}{doi: \begingroup \urlstyle{rm}\Url}\fi

\bibitem[Audibert and Tsybakov(2007)]{audibert2007}
J.-Y. Audibert and A.~B. Tsybakov.
\newblock Fast learning rates for plug-in classifiers.
\newblock \emph{Ann. Statist.}, 35\penalty0 (2):\penalty0 608--633, 04 2007.

\bibitem[Bartlett et~al.(2006)Bartlett, Jordan, and
  McAuliffe]{bartlett-jordan-mcauliffe2006}
P.~Bartlett, M.~I. Jordan, and J.~D. McAuliffe.
\newblock Convexity, classification, and risk bounds.
\newblock \emph{Journal of the American Statistical Association}, 101\penalty0
  (473):\penalty0 138--156, 2006.

\bibitem[Bartlett et~al.(2005)Bartlett, Bousquet, and Mendelson]{bartlett2005}
P.~L. Bartlett, O.~Bousquet, and S.~Mendelson.
\newblock Local rademacher complexities.
\newblock \emph{Ann. Statist.}, 33\penalty0 (4):\penalty0 1497--1537, 08 2005.

\bibitem[Cauchy(1821)]{cauchy1821}
A-L. Cauchy.
\newblock \emph{Cours d'Analyse de l'\'{E}cole royale polytechnique}, pages
  48--52.
\newblock 1821.

\bibitem[Ces\`{a}ro(1888)]{cesaro1888}
E.~Ces\`{a}ro.
\newblock Sur la convergence des s\'{e}ries.
\newblock \emph{Nouvelles annales de math\'{e}matiques}, 7:\penalty0 49--59,
  1888.

\bibitem[Hansen(2008)]{hansen2008}
B.~E. Hansen.
\newblock Uniform convergence rates for kernel estimation with dependent data.
\newblock \emph{Econometric Theory}, 24\penalty0 (3):\penalty0 726--748, 2008.

\bibitem[Luedtke and van~der Laan(2016)]{luedtke2016}
A.~R. Luedtke and M.~J. van~der Laan.
\newblock Statistical inference for the mean outcome under a possibly
  non-unique optimal treatment strategy.
\newblock \emph{Ann. Statist.}, 44\penalty0 (2):\penalty0 713--742, 04 2016.

\bibitem[Pinelis(2019)]{pinelis_mathoverflow2019}
I.~Pinelis.
\newblock Convergence in probability of cesaro means.
\newblock MathOverflow, 2019.
\newblock URL \url{https://mathoverflow.net/q/347079}.
\newblock URL:https://mathoverflow.net/q/347079 (version: 2019-11-27).

\bibitem[Tsybakov(2008)]{tsybakov2008}
A.~B. Tsybakov.
\newblock \emph{Introduction to Nonparametric Estimation}.
\newblock Springer Publishing Company, Incorporated, 1st edition, 2008.

\bibitem[van~der Vaart and Wellner(1996)]{vdV-Wellner-1996}
A.~W. van~der Vaart and J.~A. Wellner.
\newblock \emph{Weak convergence and empirical processes}.
\newblock Springer Series in Statistics. Springer-Verlag, New York, 1996.
\newblock With applications to statistics.

\bibitem[Van~der Vaart(2000)]{van2000asymptotic}
Aad~W Van~der Vaart.
\newblock \emph{Asymptotic statistics}, volume~3.
\newblock Cambridge university press, 2000.

\end{thebibliography}

\end{document}